\documentclass[11pt,a4paper]{article}
\usepackage[OT1]{fontenc}
\usepackage{amsmath,amssymb,amsfonts,amsthm,mathtools,mathrsfs}
\usepackage{mathptmx}
\usepackage[margin=24mm]{geometry}
\usepackage{enumitem,booktabs,array}
\usepackage[hidelinks]{hyperref}
\setlist[enumerate]{label=\textup{(\arabic*)},leftmargin=2em,itemsep=2pt}
\numberwithin{equation}{section}
\theoremstyle{definition}
\newtheorem{definition}{Definition}[section]

\theoremstyle{plain}
\newtheorem{theorem}[definition]{Theorem}
\newtheorem{proposition}[definition]{Proposition}
\newtheorem{lemma}[definition]{Lemma}
\newtheorem{corollary}[definition]{Corollary}
\newtheorem*{theoremA}{Theorem A}
\theoremstyle{remark}

\DeclareMathOperator{\Ext}{Ext}
\DeclareMathOperator{\Hom}{Hom}

\DeclareMathOperator{\pd}{pd}
\DeclareMathOperator{\depth}{depth}

\newcommand{\Z}{\mathbb Z}
\newcommand{\N}{\mathbb N}

\newcommand{\gmod}[1]{#1\text{-}\mathrm{gr}}
\newcommand{\umod}[1]{#1\text{-}\mathrm{mod}}
\newcommand{\tw}[2]{{}^{#1}\!#2}
\newcommand{\Id}{\mathrm{Id}}
\hypersetup{pdftitle={Self-extensions over quantum complete intersections},
  pdfsubject={Compact manuscript; left modules and arbitrary commutation parameters}}

\begin{document}
\begin{center}
{\Large\bfseries The Auslander--Reiten conjecture for quantum complete intersections}
\end{center}

\medskip
\centerline{\textbf{Weiheng Xia}$^*$ }

 \renewcommand{\thefootnote}{\alph{footnote}}
 \setcounter{footnote}{-1} \footnote{ $^*$ Corresponding author.
 Email:xia.weiheng@qq.com}

\begin{abstract}
Let \(A\) be a finite-dimensional quantum complete intersection over an
arbitrary field. We prove that a finite-dimensional left \(A\)-module \(M\)
is projective whenever \(\Ext_A^1(M,M)=\Ext_A^2(M,M)=0\), with no
restriction on the multiplicative orders of the commutation parameters.
Consequently, \(A\) satisfies the Auslander--Reiten conjecture and
Tachikawa's second conjecture. The proof combines the gradability of rigid modules with
successive graded twists and the self-extension criterion of
Avramov and Buchweitz for commutative complete intersections. We derive a formula comparing ordinary self-extension groups under graded twisting, apply the resulting criterion to the Liu--Schulz algebras, and describe the successive twists explicitly in the four-generator case.
\end{abstract}
\noindent\textbf{2020 Mathematics Subject Classification.}
Primary 16E05, 16E30; Secondary 16G10, 16W50.
\par\smallskip
\noindent\textbf{Keywords.}
Quantum complete intersection; self-extension; Auslander--Reiten conjecture;
graded twist; Liu--Schulz algebra.

\section{Introduction}

The Auslander--Reiten conjecture, formulated in the study of the
generalized Nakayama conjecture \cite{AR75}, asserts that an Artin algebra
\(A\) satisfies
\[
 \Ext_A^i(M,M\oplus A)=0\quad(i>0)
 \quad\Longrightarrow\quad M\text{ is projective}
\]
for every finitely generated left \(A\)-module \(M\). We abbreviate this
assertion to ARC. For a self-injective algebra, the groups
\(\Ext_A^i(M,A)\) vanish automatically for \(i>0\); ARC is then the
assertion that every self-orthogonal module is projective, usually called
Tachikawa's second conjecture, or TC2 \cite{Tachikawa73}.

Over commutative complete intersections, Avramov and Buchweitz proved
that vanishing of a single positive even self-extension group detects
finite projective dimension \cite[Theorem 4.2]{AB00}. For quantum complete
intersections, roots of unity occur as hypotheses in established
Ext-symmetry results \cite[Theorems 3.3 and 3.4]{Bergh09}; the roots of
unity case is also recorded among the classes satisfying ARC in
\cite[Introduction]{CHQW23}.

Fix a field \(k\), an integer \(c\geq1\), integers \(a_i\geq2\), and
parameters \(q_{ij}\in k^\times\) for \(i<j\). We consider the quantum
complete intersection
\begin{equation}\label{eq:QCI}
 A=A(\mathbf a,\mathbf q)=
 \frac{k\langle x_1,\ldots,x_c\rangle}
 {(x_i^{a_i}\ (1\leq i\leq c),\quad
   x_ix_j-q_{ij}x_jx_i\ (1\leq i<j\leq c))}.
\end{equation}
Our main result is the following projectivity criterion.

\begin{theoremA}
For every field \(k\), every algebra \(A\) in \eqref{eq:QCI}, and every
finite-dimensional left \(A\)-module \(M\),
\[
 \Ext_A^1(M,M)=0=\Ext_A^2(M,M)
 \quad\Longrightarrow\quad M\text{ is projective}.
\]
\end{theoremA}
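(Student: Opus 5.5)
The plan is to move from the noncommutative algebra \(A\) to the commutative complete intersection \(B=k[x_1,\ldots,x_c]/(x_1^{a_1},\ldots,x_c^{a_c})\), using gradings and twists, and then to apply \cite[Theorem 4.2]{AB00} there. The algebra \(A\) is \(\Z^c\)-graded with \(\deg x_i=e_i\). Moreover, \(A\) is a twist of \(B\) by the bicharacter (a 2-cocycle on \(\Z^c\)) determined by the \(q_{ij}\), in the Zhang sense. The obstacle is that \(M\) is only an ungraded module, so the first task is to make it graded.

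\textbf{Step 1: gradability.} The condition \(\Ext^1_A(M,M)=0\) makes \(M\) rigid. Its orbit under the algebraic torus \((k^\times)^c\), which acts on \(A\) by \(x_i\mapsto t_ix_i\), is then open in the module variety \(\mathrm{mod}_A(\dim M)\), so \(M\) is stable under twisting by the torus. I would argue that \(M\cong \tw{t}{M}\) for all \(t\) gives \(M\) a \(\Z^c\)-grading compatible with \(A\); this is the standard Gordon--Green / Voigt type argument. If \(k\) is finite or small, I would first pass to an infinite field extension \(K\supseteq k\). This is harmless because Ext commutes with flat base change and projectivity descends. For a graded module, the vanishing of ungraded Ext is equivalent to vanishing of the graded \(\underline{\Ext}\) in all degree shifts.

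\textbf{Step 2: successive twists.} I would write the cocycle as a product of elementary cocycles, each involving one pair \((i,j)\). Twisting one pair at a time passes from \(A\) through intermediate quantum complete intersections to \(B\). Zhang twisting gives an equivalence of graded module categories, \(\gmod{A}\simeq\gmod{B}\), sending \(M\) to \(M^\sigma\) and preserving graded Ext, projectivity and shifts. The main point is a formula comparing ordinary self-extensions. The ungraded \(\Ext^n_A(M,M)\) equals \(\bigoplus_{g}\underline{\Ext}^n(M,M(g))\), and under the twist each summand is carried to \(\underline{\Ext}^n(M^\sigma,M^\sigma(g))\), possibly up to a scalar automorphism. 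Hence \(\Ext^n_A(M,M)=0\) if and only if \(\Ext^n_B(M^\sigma,M^\sigma)=0\). I expect this step to be the main obstacle. One has to check that the twist functor commutes with shifts up to natural isomorphism, which holds only because the twisting data is a \(2\)-cocycle. One also has to ensure no root-of-unity hypothesis sneaks in, which is why I prefer pairwise twists with explicit cocycles to a single global one.

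\textbf{Step 3: conclusion.} With \(\Ext^2_B(M^\sigma,M^\sigma)=0\) over the commutative complete intersection \(B\), \cite[Theorem 4.2]{AB00} gives \(\pd_B M^\sigma<\infty\). Since \(B\) is self-injective (Artinian Gorenstein), Auslander--Buchsbaum shows \(M^\sigma\) is free. Graded projectivity transfers back along the equivalence, so \(M\) is graded projective over \(A\), hence projective. Only \(\Ext^2\) is used in the final step; \(\Ext^1\) serves to supply the grading.
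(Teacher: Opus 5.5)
Your outline is correct, but it takes a genuinely different route from the paper.

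\textbf{The paper's route.} It never grades \(M\) by \(\Z^c\). It uses only single-coordinate \(\Z\)-gradings, to which Theorem~\ref{thm:AO} applies verbatim, and it twists by a diagonal \(\sigma_j\) that rescales generators of degree zero. That twist does not commute with shifts; only \(F_\sigma(\tw{\sigma^d}{M}(d))=F_\sigma(M)(d)\) holds. The paper therefore needs the extra input \(\tw{\sigma^d}{M}\simeq M\) from Proposition~\ref{prop:diagonal} to kill the summands \(\Ext^n_{\gmod{B}}(M,\tw{\sigma^d}{M}(d))\). As a result it transfers only vanishing, and it must re-grade at each of the \(c-1\) steps.

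\textbf{Your route.} You grade once by \(\Z^c\) and twist by a bicharacter, e.g.\ \(\mu(g,h)=\prod_{u<v}q_{uv}^{g_vh_u}\) with \(a*m=\mu(\deg a,\deg m)\,am\). Then \(M^\mu(g)\cong(M(g))^\mu\) naturally, via \(m\mapsto\mu(s,g)^{-1}m\) on \(M_s\). This gives a genuine isomorphism \(\Ext^n_A(M,M)\cong\Ext^n_B(M^\mu,M^\mu)\) for all \(n\), with no separate stability input at the transfer stage. A single global twist already works, so your pairwise precaution about roots of unity is unnecessary. The price is a multigraded gradability theorem for rigid modules. That is not Theorem~\ref{thm:AO} as stated, so you must supply it. One way is the Gordon--Green torus-lifting argument applied to each indecomposable summand (each is rigid); another is iterating the \(\Z\)-graded result through coverings.

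\textbf{Two small fixes in Step~1.}
\begin{enumerate}
\item It is the \(GL_n\)-orbit of a rigid module that is open (Voigt), not its torus orbit. Torus stability then follows because \(\{t:\tw{t}{M}\simeq M\}\) is an open subgroup of the connected torus.
\item These algebraic-group arguments need \(K\) algebraically closed, not merely infinite. Lemma~\ref{lem:base-change} allows this base change.
\end{enumerate}
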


Thus ARC and TC2 hold for \eqref{eq:QCI} with arbitrary nonzero
commutation parameters. In particular, they hold for the Liu--Schulz
family. The proof uses the gradability theorem of Amiot--Oppermann
\cite[Corollary 4.5]{AO13}, the graded twists introduced by Zhang
\cite{Zhang96}, and the commutative criterion of
\cite[Theorem 4.2]{AB00}. We construct a finite sequence of graded twists leading to a commutative truncated polynomial algebra. At each step, the first and second ordinary self-extension groups remain zero, and projectivity is preserved and reflected.

Section~\ref{sec:prelim} records the algebra and homological facts used
below. Section~\ref{sec:transport} establishes the twist and Ext formulas.
Section~\ref{sec:main} proves Theorem A, gives its consequences, and
contains a complete four-generator calculation.

\medskip\noindent\textbf{Conventions.}
Algebras and modules over \(k\) are finite-dimensional unless stated
otherwise, and all modules are left modules. Write \(\umod{B}\) for
ordinary modules and \(\gmod{B}\) for \(\Z\)-graded modules with
degree-zero morphisms; \(\Ext_B\) denotes ordinary Ext. Morphisms act on
the right and compose from left to right:
\((x)(fg)=((x)f)g\). Functors compose from right to left:
\((FG)(X)=F(G(X))\).

\section{Preliminaries}\label{sec:prelim}

\subsection{Quantum complete intersections}

For \(A\) in \eqref{eq:QCI}, set
\[
 I=\{\alpha\in\N^c:0\leq\alpha_i<a_i\},\qquad
 x^\alpha=x_1^{\alpha_1}\cdots x_c^{\alpha_c},\qquad
 \kappa(\alpha,\beta)=\prod_{u<v}q_{uv}^{-\alpha_v\beta_u}.
\]
All inequalities between exponent vectors are coordinatewise.

\begin{lemma}\label{lem:structure}
The elements \(x^\alpha\), \(\alpha\in I\), form a basis of \(A\), with
\begin{equation}\label{eq:PBWproduct}
 x^\alpha x^\beta=
 \begin{cases}
 \kappa(\alpha,\beta)x^{\alpha+\beta},&\alpha+\beta\in I,\\
 0,&\alpha+\beta\notin I.
 \end{cases}
\end{equation}
The algebra \(A\) is local and Frobenius, hence self-injective.
For each \(j\), it has the coordinate grading
\(\deg_jx_i=\delta_{ij}\). Every diagonal assignment
\((x_i)\delta_\lambda=\lambda_i x_i\), with \(\lambda_i\in k^\times\),
defines an automorphism preserving these gradings.
\end{lemma}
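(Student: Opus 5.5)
The plan is to prove everything from the normal-form formula \eqref{eq:PBWproduct}, arguing via a spanning set, a faithful representation, and then reading off the structural properties. First, spanning: using the relations \(x_ix_j=q_{ij}x_jx_i\) for \(i<j\), every word in the generators can be reordered to a scalar multiple of an ordered monomial \(x_1^{\gamma_1}\cdots x_c^{\gamma_c}\). Then \(x_i^{a_i}=0\) kills every such monomial with some \(\gamma_i\geq a_i\), so the \(x^\alpha\) with \(\alpha\in I\) span \(A\). To obtain the scalar in \eqref{eq:PBWproduct}, I will compute \(x^\alpha x^\beta\) by moving each \(x_u^{\beta_u}\) leftward past the block \(x_{u+1}^{\alpha_{u+1}}\cdots x_c^{\alpha_c}\). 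Each transposition \(x_vx_u\mapsto q_{uv}^{-1}x_ux_v\) with \(u<v\) contributes a factor \(q_{uv}^{-1}\), and there are \(\alpha_v\beta_u\) such transpositions, giving exactly \(\kappa(\alpha,\beta)\). If \(\alpha+\beta\notin I\), some exponent reaches \(a_i\) and the product vanishes.

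The main step is linear independence. Here I will construct the \(|I|\)-dimensional vector space \(V\) with basis \(e_\alpha\), \(\alpha\in I\), and let \(x_i\) act by
\[
x_i e_\alpha=\kappa(\epsilon_i,\alpha)\,e_{\alpha+\epsilon_i},
\]
with the convention that the right-hand side is \(0\) if \(\alpha+\epsilon_i\notin I\); here \(\epsilon_i\) is the \(i\)th unit vector. Then I check the defining relations. The relation \(x_i^{a_i}=0\) is immediate. For \(x_ix_j-q_{ij}x_jx_i\), the comparison reduces to the identity
\[
\kappa(\epsilon_i,\alpha+\epsilon_j)\kappa(\epsilon_j,\alpha)=q_{ij}\,\kappa(\epsilon_j,\alpha+\epsilon_i)\kappa(\epsilon_i,\alpha),
\]
where both sides vanish together. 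This identity follows from the bimultiplicativity of \(\kappa\): \(\kappa(\epsilon_i,\epsilon_j)=1\) and \(\kappa(\epsilon_j,\epsilon_i)=q_{ij}^{-1}\) for \(i<j\). Hence \(V\) is an \(A\)-module, and \(x^\alpha e_0\) is a nonzero multiple of \(e_\alpha\). Consequently the \(x^\alpha\) are linearly independent. This bookkeeping is the only real obstacle; everything else is formal.

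For the remaining properties, let \(\mathfrak m\) be the span of the \(x^\alpha\) with \(\alpha\neq0\). By \eqref{eq:PBWproduct}, \(\mathfrak m\) is an ideal, and it is nilpotent since \(\mathfrak m^N=0\) for \(N>\sum(a_i-1)\). Also \(A/\mathfrak m\cong k\). Hence \(A\) is local. For the Frobenius property, take \(\omega=(a_1-1,\ldots,a_c-1)\) and let \(\phi\) be the linear form picking out the coefficient of \(x^\omega\). The bilinear form \((a,b)\mapsto\phi(ab)\) pairs \(x^\alpha\) with \(x^{\omega-\alpha}\) with nonzero scalar \(\kappa(\alpha,\omega-\alpha)\), and it pairs \(x^\alpha\) with \(x^\beta\) trivially when \(\beta\neq\omega-\alpha\). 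Its Gram matrix is therefore a permuted diagonal matrix with nonzero entries, so the form is nondegenerate. Thus \(A\) is Frobenius, and Frobenius algebras are self-injective. Finally, every defining relation is homogeneous for each \(\deg_j\). Each relation is also sent to a nonzero scalar multiple of itself by \(x_i\mapsto\lambda_ix_i\), so \(\delta_\lambda\) is a well-defined endomorphism. It is bijective with inverse \(\delta_{\lambda^{-1}}\), and it preserves degrees since it is diagonal on the homogeneous basis.
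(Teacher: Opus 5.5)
Your proof is correct and follows the paper's route: spanning by reordering (with \(\alpha_v\beta_u\) transpositions producing \(\kappa(\alpha,\beta)\)), linear independence via a concrete model on the vector space with basis indexed by \(I\), locality from the nilpotent ideal spanned by the \(x^\alpha\) with \(\alpha\neq0\), and the Frobenius form given by the coefficient of \(x^\omega\). The only variation is that you realize the model as a left module and check the defining relations there, whereas the paper puts an associative multiplication on that space (via the cocycle identity for \(\kappa\)) and maps \(A\) to it; your module is the left regular representation of the paper's model, which spares you the associativity check.
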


\begin{proof}
Reordering by \(x_vx_u=q_{uv}^{-1}x_ux_v\) shows that the displayed
monomials span and gives \eqref{eq:PBWproduct}. On a vector space with
basis indexed by \(I\), this formula defines an associative multiplication:
\[
 \kappa(\alpha,\beta)\kappa(\alpha+\beta,\gamma)
 =\kappa(\beta,\gamma)\kappa(\alpha,\beta+\gamma),
\]
If \(\alpha+\beta+\gamma\notin I\), both bracketings are zero.
The basis elements corresponding to the standard unit vectors
satisfy the defining relations of \(A\).
The resulting homomorphism from \(A\) sends the spanning monomials
to distinct basis elements, proving their linear independence.

The ideal \(J=(x_1,\ldots,x_c)\) is nilpotent and \(A/J=k\), so \(A\)
is local. Let \(\omega=(a_1-1,\ldots,a_c-1)\), and let \(\ell:A\to k\)
extract the coefficient of \(x^\omega\). Relative to the bases
\(\{x^\alpha\}\) and \(\{x^{\omega-\beta}\}\), the pairing
\((a,b)\mapsto(ab)\ell\) has diagonal matrix with nonzero entries
\(\kappa(\alpha,\omega-\alpha)\). It is associative and nondegenerate,
so \(A\) is Frobenius. Finally, the relations are homogeneous for the
coordinate gradings and are preserved by diagonal scaling.
\end{proof}

\subsection{Graded homological facts and scalar extension}

For \(U\in\gmod{B}\), use the shift \(U(d)_r=U_{r+d}\).
Thus a degree-\(d\) map \(U\to V\) is a degree-zero map \(U\to V(d)\).

\begin{lemma}\label{lem:graded-facts}
Let \(B\) be a finite-dimensional \(\Z\)-graded algebra and
\(U,V\in\gmod{B}\).
\begin{enumerate}
\item Every \(B\)-linear map is a finite sum of homogeneous maps, and
\[
 \Hom_B(U,V)=\bigoplus_{d\in\Z}\Hom_{\gmod{B}}(U,V(d)).
\]
\item The category \(\gmod{B}\) has enough projectives. Its projective
objects are precisely the objects whose underlying modules are
projective. Every object has a projective resolution with
finite-dimensional graded terms and degree-zero differentials.
\item For every \(n\geq0\),
\begin{equation}\label{eq:Ext-decomp}
 \Ext_B^n(U,V)\simeq
 \bigoplus_{d\in\Z}\Ext_{\gmod{B}}^n(U,V(d)).
\end{equation}
\end{enumerate}
\end{lemma}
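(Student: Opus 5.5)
The plan is to prove the three parts in order, each feeding the next, using only that \(B\) and \(U,V\) are finite-dimensional.

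For (1), I take a \(B\)-linear map \(f\colon U\to V\). Since \(U\) and \(V\) are finite-dimensional, only finitely many graded pieces \(U_r\) and \(V_s\) are nonzero. For each \(d\), define \(f_d\) on \(U_r\) as the composite of \(f|_{U_r}\) with the projection \(V\to V_{r+d}\). Only finitely many \(f_d\) are nonzero, and \(f=\sum_d f_d\). To see that each \(f_d\) is \(B\)-linear, take homogeneous \(b\in B_e\) and \(u\in U_r\). Comparing the degree-\((r+e+d)\) components of \(f(bu)=bf(u)\) gives \(f_d(bu)=bf_d(u)\), because multiplication by \(b\) shifts degree by exactly \(e\). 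Uniqueness of the decomposition is clear, and a degree-\(d\) map is the same thing as a morphism \(U\to V(d)\) in \(\gmod{B}\). This gives the displayed direct sum.

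For (2), I first produce projectives. The shifted free modules \(B(d)\) are graded, and since \(\Hom_{\gmod B}(B(d),U)\cong U_{-d}\) (up to sign convention), each \(B(d)\) is projective in \(\gmod{B}\). Choosing homogeneous generators of \(U\) gives a degree-zero surjection from a finite sum of shifts of \(B\). Its kernel is again finite-dimensional and graded, so iterating yields the asserted resolution with finite-dimensional graded terms and degree-zero differentials.

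Still in (2), the characterization of graded projectives goes as follows.
\begin{itemize}
\item If \(P\) is projective in \(\gmod{B}\), the surjection \(F\to P\) from a graded free module splits in \(\gmod{B}\). Hence \(P\) is a summand of a free module and is projective as an ungraded module.
\item Conversely, suppose the underlying module of \(P\) is projective. Take a degree-zero surjection \(\pi\colon F\to P\) with \(F\) graded free. It has an ungraded section \(s\). The degree-zero component \(s_0\), from part (1), is again \(B\)-linear. Comparing degree-zero components of \(s\pi=\mathrm{id}\), using that \(\pi\) has degree zero, gives \(s_0\pi=\mathrm{id}\). So \(P\) is a graded summand of \(F\), hence projective in \(\gmod{B}\).
\end{itemize}
This homogeneous-component trick is the only real point in (2).

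For (3), I take the graded projective resolution \(P_\bullet\to U\) from (2). By (2) its underlying complex is an ordinary projective resolution, so \(\Ext^n_B(U,V)\) is the cohomology of \(\Hom_B(P_\bullet,V)\). By (1), each term splits as \(\bigoplus_d\Hom_{\gmod B}(P_\bullet,V(d))\). The differentials are given by composition with degree-zero maps, so they preserve this decomposition. Cohomology commutes with direct sums, and the \(d\)-th summand computes \(\Ext^n_{\gmod B}(U,V(d))\), because \(P_\bullet\) is a projective resolution in \(\gmod{B}\).

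The step I expect to be the main obstacle is the convergence bookkeeping in (3): one must make sure the ungraded \(\Hom\) really is the direct sum, not a product. This is where finite-dimensionality enters. Each \(P_n\) is finitely generated, so (1) applies termwise, and only finitely many \(d\) contribute in each degree \(n\).
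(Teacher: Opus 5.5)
Your proposal is correct and follows the paper's proof. It uses the same steps: shifted free modules and successive kernels for the resolutions, and a termwise application of (1) to the Hom complex, with degree-zero differentials, for (3). The only difference is that you prove (1) and the projectivity comparison directly, via homogeneous components and the degree-zero part of an ungraded section, where the paper instead cites Hazrat's Theorem 1.2.6 and Proposition 1.2.15 applied to \(B^{\mathrm{op}}\).
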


\begin{proof}
The Hom decomposition and the projectivity comparison are
\cite[Theorem 1.2.6 and Proposition 1.2.15]{Hazrat16}, applied to
\(B^{\mathrm{op}}\) to obtain left-module statements. Shifted free
modules provide enough projectives; choosing finite homogeneous
generators for successive kernels gives the stated resolutions.
For (3), take such a resolution
\(\cdots\to P_1\to P_0\to U\to0\). It is also an ordinary projective
resolution. Apply (1) termwise to its Hom complex. The differential
preserves internal degree, and direct sums are exact, so taking
cohomology gives \eqref{eq:Ext-decomp}.
\end{proof}

\begin{lemma}\label{lem:base-change}
Let \(A\) be a finite-dimensional \(k\)-algebra, \(K/k\) a field
extension, and \(M,N\in\umod{A}\). Write \(A_K=K\otimes_k A\) and
\(M_K=K\otimes_kM\). Then
\begin{equation}\label{eq:base-Ext}
 K\otimes_k\Ext_A^n(M,N)\simeq\Ext_{A_K}^n(M_K,N_K)
 \qquad(n\geq0).
\end{equation}
Moreover, \(M_K\) is projective if and only if \(M\) is projective.
\end{lemma}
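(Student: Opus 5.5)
The plan is to use flat base change along \(A\to A_K\) together with a finite free resolution by finitely generated free modules. Since \(A\) is a finite-dimensional \(k\)-algebra, hence Noetherian, \(M\) has a projective resolution
\[
 \cdots\to F_1\to F_0\to M\to 0
\]
with each \(F_i\) a finitely generated free \(A\)-module. The functor \(K\otimes_k-\) is exact, and \(K\otimes_k A^m\simeq A_K^m\). Applying it therefore gives a resolution \(K\otimes_kF_\bullet\to M_K\) by finitely generated free \(A_K\)-modules, so this is a projective resolution of \(M_K\) over \(A_K\).

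For the Ext isomorphism I will compare Hom complexes. For a finitely generated free module \(F=A^m\) there is a natural isomorphism
\[
 K\otimes_k\Hom_A(F,N)\simeq\Hom_{A_K}(F_K,N_K),
\]
since both sides identify with \(K\otimes_kN^m\simeq N_K^m\). By additivity this map is natural in \(F\), so it commutes with the differentials induced from the resolution. Because \(K\otimes_k-\) is exact, it commutes with taking cohomology. Hence
\[
 K\otimes_k\Ext^n_A(M,N)=K\otimes_kH^n\Hom_A(F_\bullet,N)\simeq H^n\Hom_{A_K}(K\otimes_k F_\bullet,N_K)=\Ext^n_{A_K}(M_K,N_K).
\]
The step that needs the most care is this one: the comparison isomorphism holds only because each \(F_i\) is finitely generated, so \(\Hom\) commutes with the finite direct sum. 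This is exactly why I choose a resolution by finitely generated free modules.

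For projectivity, one direction is immediate. If \(M\) is a summand of \(A^m\), then \(M_K\) is a summand of \(A_K^m\), so \(M_K\) is projective. For the converse, the strategy is to translate projectivity into vanishing of \(\Ext^1_A(M,-)\) and then test it on a single module. Since every finitely generated module over a finite-dimensional algebra is a quotient of a finitely generated free module, it suffices to test against the syzygy. Concretely, choose \(0\to\Omega\to A^m\to M\to0\) with \(\Omega\) finitely generated. Then \(M\) is projective if and only if this sequence splits, which holds if and only if \(\Ext^1_A(M,\Omega)=0\).

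Applying \eqref{eq:base-Ext} with \(N=\Omega\) gives \(K\otimes_k\Ext^1_A(M,\Omega)\simeq\Ext^1_{A_K}(M_K,\Omega_K)\). If \(M_K\) is projective, the right-hand side is zero. Since \(K\) is faithfully flat over \(k\), it follows that \(\Ext^1_A(M,\Omega)=0\), and so \(M\) is projective.
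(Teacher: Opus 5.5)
Your proposal is correct and follows essentially the same route as the paper: a resolution by finitely generated free modules with degreewise Hom base change for \eqref{eq:base-Ext}, then the sequence \(0\to\Omega\to A^m\to M\to0\) and faithful flatness of \(K\) over \(k\) to descend projectivity. The only cosmetic difference is that the paper applies injectivity of \(V\to K\otimes_kV\) to the single class \([\xi]\), whereas you conclude that the whole group \(\Ext^1_A(M,\Omega)\) vanishes. Your version has the small advantage of not needing to match \(1\otimes[\xi]\) with the class of the base-changed sequence.
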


\begin{proof}
Take a projective resolution of \(M\) with finitely generated terms.
The degreewise Hom--scalar-extension isomorphisms and exactness of
\(K\otimes_k-\) give \eqref{eq:base-Ext}. Scalar extension preserves
projectivity. Conversely, suppose that \(M_K\) is projective.
Choose a short exact sequence
\[
 \xi:0\longrightarrow N\longrightarrow P
 \longrightarrow M\longrightarrow0
\]
with \(P\) finite free and apply \(K\otimes_k-\).
The resulting sequence splits because \(M_K\) is projective.
Thus \(1\otimes[\xi]=0\) by \eqref{eq:base-Ext}.
The map
\[
 V\longrightarrow K\otimes_k V,
 \qquad v\longmapsto1\otimes v,
\]
is injective for every \(k\)-vector space \(V\).
Hence \([\xi]=0\), so \(M\) is projective.
\end{proof}

\section{Gradings, twists, and ordinary self-extensions}\label{sec:transport}

Starting with an ordinary rigid module, we choose a compatible grading, apply a left Zhang twist, and then forget the grading. To compare ordinary self-extension groups before and after this construction, we express them as direct sums of graded Ext groups with shifted targets. Since twisting does not in general commute with grading shifts, we first establish the precise relation between these two operations. We then use this relation to prove the vanishing result needed in the proof of the main theorem.

A module \(M\) is \emph{rigid} if \(\Ext_B^1(M,M)=0\).
The following is the gradability theorem used in each step of the
construction.

\begin{theorem}[{\cite[Corollary 4.5]{AO13}}]\label{thm:AO}
Let \(k\) be algebraically closed and let \(B\) be a finite-dimensional
\(\Z\)-graded \(k\)-algebra. Every finite-dimensional rigid ordinary
left \(B\)-module admits a compatible grading.
\end{theorem}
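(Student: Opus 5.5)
This statement is cited from Amiot--Oppermann, so the plan reconstructs their argument; it does not need anything from this paper. The idea is to view a $\Z$-grading on $B$ as an action of the multiplicative group $\mathbb G_m$ and to use that rigid modules have open orbits in the module variety. Let $d=\dim_k M$ and let $\mathrm{mod}_B(d)$ be the affine variety of $B$-module structures on $k^d$. For a fixed finite presentation of $B$, this is the closed subvariety of tuples of $d\times d$ matrices satisfying the relations. $\mathrm{GL}_d$ acts by conjugation, and its orbits are the isoclasses. The grading gives an algebraic $\mathbb G_m$-action $t\cdot b=t^{n}b$ for $b\in B_n$. Twisting a module structure by these automorphisms makes $\mathbb G_m$ act on $\mathrm{mod}_B(d)$, and this action commutes with the $\mathrm{GL}_d$-action.

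\emph{Step 1 (the orbit is $\mathbb G_m$-stable).} By Voigt's lemma, rigidity $\Ext^1_B(M,M)=0$ makes the orbit $\mathcal O_M$ open in $\mathrm{mod}_B(d)$. Since $\mathbb G_m$ is connected, it preserves the irreducible component containing $\mathcal O_M$, in which $\mathcal O_M$ is open and dense. Also $t\cdot\mathcal O_M=\mathcal O_{{}^{t}M}$ is again an orbit, open in that component. Two nonempty open subsets of an irreducible space meet, so ${}^{t}M\cong M$ for all $t\in k^\times$.

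\emph{Step 2 (lift to a group action).} Consider the group
\[
G=\{(g,t)\in\mathrm{GL}_d\times\mathbb G_m : g\cdot(t\cdot m)=m\},
\]
where $m$ is the point of $M$. It is an extension
\[
1\to\mathrm{Aut}_B(M)\to G\to\mathbb G_m\to1,
\]
and it is surjective onto $\mathbb G_m$ by Step 1. Now $\mathrm{Aut}_B(M)$ is the unit group of $\mathrm{End}_B(M)$, so it is connected. Its unipotent radical $1+\mathrm{rad}\,\mathrm{End}_B(M)$ has a Levi complement $\prod\mathrm{GL}_{n_i}$, using that $k$ is algebraically closed. I would then find a one-parameter subgroup $\lambda:\mathbb G_m\to G$ whose composite with the projection to $\mathbb G_m$ is a nonzero power $t\mapsto t^{r}$. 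Such a $\lambda$ exists because a maximal torus of $G$ surjects onto a maximal torus of the quotient $\mathbb G_m$.

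\emph{Step 3 (read off the grading).} The first coordinate of $\lambda$ is a cocharacter of $\mathrm{GL}_d$. This decomposes $k^d=\bigoplus_n M_n$ into weight spaces. The condition $\lambda(t)\cdot(t^r\cdot m)=m$ says exactly that $B_a M_n\subseteq M_{n+ra}$. So this is a grading compatible with the grading of $B$ rescaled by $r$. This would be done if $r=\pm1$, but in general $r$ need not be $\pm1$.

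\emph{Main obstacle.} The hard part is to obtain $r=\pm1$, that is, to get a genuine section of $G\to\mathbb G_m$. The first approach I would try is to pass to $G^\circ$ and use that $G^\circ\to\mathbb G_m$ has connected kernel $\mathrm{Aut}_B(M)$. Then the finite group $G/G^\circ$ injects into $\mathbb G_m/\mathrm{im}(G^\circ)$, and this quotient is trivial, so $G$ is connected. Choose a maximal torus $T\subseteq G$. It maps onto $\mathbb G_m$ with kernel $T\cap\mathrm{Aut}_B(M)$. Because $\mathrm{Aut}_B(M)$ is connected and solvable-by-reductive, $T\cap\mathrm{Aut}_B(M)$ is a torus. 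A surjection of tori with connected kernel splits on cocharacter lattices, which gives $\lambda$ with $r=1$.

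An alternative route, if this fails, is to decompose $M$ into weight spaces for the rescaled grading. Each summand $\bigoplus_{n\equiv j\ (r)}M_n$ is then a graded submodule, so $M$ is a direct sum of $r$ pieces, each carrying a grading with degrees in $j+r\Z$. Reindexing each piece by $n\mapsto(n-j)/r$ gives a compatible $\Z$-grading.
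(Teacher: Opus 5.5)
Your proof is correct, but it takes a different route from the paper, because the paper gives no argument here. It cites \cite[Corollary 4.5]{AO13}, applied to $B^{\mathrm{op}}$ to get left modules, and notes that no semisimplicity of $B_0$ is needed. You instead give a self-contained geometric proof:
\begin{enumerate}
\item Voigt's lemma makes the orbit $\mathcal O_M$ open.
\item $\mathcal O_M$ is $\mathbb G_m$-stable, so ${}^{t}M\simeq M$ for all $t\in k^\times$.
\item The stabilizer extension $1\to\mathrm{Aut}_B(M)\to G\to\mathbb G_m\to 1$ has a cocharacter lifting some power $t\mapsto t^r$, $r\neq0$.
\item Its weight spaces give a grading.
\end{enumerate}
What your route buys is transparency about where the hypotheses enter. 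Rigidity is used only in Step~1. The rest shows that any module isomorphic to all its twists ${}^{t}M$ is gradable, which is the converse of the construction $h_t$ in Proposition~\ref{prop:diagonal}. The citation keeps the paper short but hides this.

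Two remarks on your end game. First, the ``main obstacle'' is not one: your alternative route already finishes the proof for every $r\neq0$. The inclusions $B_aM_n\subseteq M_{n+ra}$ make each $\bigoplus_{n\equiv j\ (\mathrm{mod}\ r)}M_n$ a $B$-submodule. Reindexing it by $n\mapsto(n-j)/r$ gives a compatible $\Z$-grading, and the direct sum of these is a compatible grading of $M$. So the torus-splitting argument can simply be dropped.

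Second, if you keep the torus-splitting argument, ``solvable-by-reductive'' is not why $T\cap\mathrm{Aut}_B(M)$ is connected. The real argument is as follows. Put $N=\mathrm{Aut}_B(M)$, a connected normal subgroup of $G$.
\begin{enumerate}
\item By conjugacy of maximal tori in $G$, $S=(T\cap N)^{\circ}$ is a maximal torus of $N$.
\item $T\cap N$ consists of semisimple elements commuting with $S$, so it lies in the Cartan subgroup $C_N(S)$.
\item $C_N(S)$ is connected and nilpotent, and its set of semisimple elements is exactly $S$.
\end{enumerate}
Hence $T\cap N=S$. The cocharacter sequence of $1\to S\to T\to\mathbb G_m\to1$ then splits, as you say.
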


The left-module statement follows by applying the cited result to
\(B^{\mathrm{op}}\). No semisimplicity assumption on \(B_0\) is needed.
For an automorphism \(\alpha\) of \(B\), let \(\tw{\alpha}{M}\) denote
the ordinary left module with the same vector space and action
\begin{equation*}
 b\cdot_\alpha m=((b)\alpha)m.
\end{equation*}
If \(f:M\to N\) is \(B\)-linear, the same underlying linear map
defines a \(B\)-linear map
\(\tw{\alpha}{M}\to\tw{\alpha}{N}\).
In particular, twisting by \(\alpha\) preserves isomorphisms.

\begin{proposition}\label{prop:diagonal}
Assume that \(k\) is algebraically closed. If \(M\) is a rigid module
over a quantum complete intersection \(B\), then
\(\tw{\delta}{M}\simeq M\) as ordinary modules for every diagonal
automorphism \(\delta\), where \((x_i)\delta=\lambda_i x_i\) and
\(\lambda_i\in k^\times\).
\end{proposition}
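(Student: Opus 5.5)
The natural route is through a multigrading. Use the \(\Z^c\)-grading, or equivalently the \(c\) coordinate gradings of Lemma~\ref{lem:structure}, one coordinate at a time. Each diagonal automorphism factors as \(\delta=\delta_1\cdots\delta_c\), where \(\delta_j\) scales only \(x_j\) by \(\lambda_j\). Twisting is compatible with composition: \(\tw{\alpha\beta}{M}\) is obtained by iterating single twists (in one order or the other, depending on the conventions). Twisting also preserves rigidity, since it is an autoequivalence of \(\umod{B}\) that fixes \(B\). It therefore suffices to prove \(\tw{\delta_j}{M}\simeq M\) for a single \(j\), and then to apply this repeatedly to the rigid modules obtained along the way.

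For a single \(j\), apply Theorem~\ref{thm:AO} to \(B\) with the coordinate grading \(\deg_j\). This gives a compatible \(\Z\)-grading \(M=\bigoplus_r M_r\) with \(x_jM_r\subseteq M_{r+1}\) and \(x_iM_r\subseteq M_r\) for \(i\neq j\). Define \(\phi:M\to M\) by \(\phi(m)=\lambda_j^{r}m\) for \(m\in M_r\). This is a linear bijection, because \(\lambda_j\neq0\) and \(M\) is finite-dimensional, so only finitely many degrees occur.

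The remaining step is to check that \(\phi\) is an isomorphism \(M\to\tw{\delta_j}{M}\) or its inverse. On homogeneous \(m\in M_r\) we have \(\phi(x_jm)=\lambda_j^{r+1}x_jm=\lambda_j x_j\phi(m)=((x_j)\delta_j)\cdot\phi(m)\), and \(\phi(x_im)=x_i\phi(m)\) for \(i\neq j\). Hence \(\phi(bm)=b\cdot_{\delta_j}\phi(m)\) on generators, and so on all of \(B\). With the paper's right-action convention for morphisms this direction may need to be replaced by \(\lambda_j^{-r}\), which does not affect the argument.

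The main obstacle is mild. Theorem~\ref{thm:AO} gives only a \(\Z\)-grading for a single grading, which is why the plan uses one coordinate at a time and relies on rigidity being preserved by twisting. An alternative is to grade by \(\deg=\sum_i n_i\deg_i\) with \(\lambda_i=t^{n_i}\), but that only works for scalars in a common cyclic subgroup. The coordinatewise induction avoids this restriction.
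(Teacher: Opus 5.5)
Your proposal is correct and is essentially the paper's proof. You grade \(M\) by one coordinate grading at a time via Theorem~\ref{thm:AO}, use \(m\mapsto\lambda_j^{r}m\) on \(M_r\) as an isomorphism \(M\to\tw{\delta_j}{M}\), and combine the commuting coordinate scalings. The paper differs only cosmetically: it grades the original \(M\) separately for each coordinate and then twists the resulting isomorphisms, so it never needs rigidity of the intermediate twisted modules (your justification of that rigidity is fine anyway), and no switch to \(\lambda_j^{-r}\) is needed under its conventions.
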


\begin{proof}
Fix a coordinate \(i\), give \(x_i\) degree one and the other generators
degree zero, and apply Theorem~\ref{thm:AO} to grade \(M\).
For \(t\in k^\times\), the automorphism \(\delta_{i,t}\) scaling only
\(x_i\) satisfies \((b)\delta_{i,t}=t^s b\) for \(b\in B_s\).
The linear isomorphism
\[
 h_t:M\longrightarrow\tw{\delta_{i,t}}{M},
 \qquad (m)h_t=t^r m\quad(m\in M_r)
\]
is \(B\)-linear, since for \(b\in B_s\),
\[
 (bm)h_t=t^{r+s}bm
 =((b)\delta_{i,t})((m)h_t).
\]
Every diagonal automorphism is a product of these commuting coordinate
scalings. Successively twisting the resulting ordinary isomorphisms
proves the assertion. The coordinate gradings may be chosen separately;
a simultaneous \(\Z^c\)-grading is not required.
\end{proof}

For the remainder of this section the field is arbitrary, \(B\) is a
finite-dimensional \(\Z\)-graded algebra, and \(\sigma\) is a
grading-preserving automorphism. We use the following left version of
the automorphism twist of \cite{Zhang96}. On the graded vector space
\(B\), set
\begin{equation*}
 a*b=((a)\sigma^t)b,
 \qquad a\in B_s,\ b\in B_t,
\end{equation*}
and denote the resulting algebra by \(B^\sigma\).
For \(M\in\gmod{B}\), let \(F_\sigma M\) have the same graded vector
space, with action
\begin{equation*}
 b*m=((b)\sigma^r)m,
 \qquad b\in B_s,\ m\in M_r.
\end{equation*}
Both formulas extend bilinearly. On degree-zero morphisms,
\(F_\sigma\) retains the underlying linear map.

\begin{lemma}\label{lem:twist-equivalence}
These formulas define a graded algebra and an exact equivalence
\[
 F_\sigma:\gmod{B}\longrightarrow\gmod{B^\sigma}
\]
with inverse \(F_{\sigma^{-1}}\). The underlying ordinary module of
\(F_\sigma M\) is projective if and only if that of \(M\) is projective.
\end{lemma}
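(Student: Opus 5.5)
We verify the three claims in order: $B^\sigma$ is an associative graded algebra, $F_\sigma$ is an exact equivalence with inverse $F_{\sigma^{-1}}$, and projectivity of underlying modules is preserved and reflected. Throughout we use that $\sigma$ preserves degrees, so $(a)\sigma^t\in B_s$ whenever $a\in B_s$.

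\emph{The algebra $B^\sigma$.} The product $*$ is bilinear and sends $B_s\times B_t$ into $B_{s+t}$. For associativity take $a\in B_s$, $b\in B_t$, $c\in B_u$. Then
\[
(a*b)*c=((a)\sigma^t\,b)\sigma^u\,c=(a)\sigma^{t+u}\,(b)\sigma^u\,c,
\]
since $\sigma^u$ is multiplicative. On the other side, $b*c\in B_{t+u}$, so
\[
a*(b*c)=(a)\sigma^{t+u}\,(b)\sigma^u\,c .
\]
The two agree. The unit $1\in B_0$ remains a two-sided unit: $1*b=(1)\sigma^t b=b$ and $b*1=(b)\sigma^0\cdot1=b$.

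\emph{The module $F_\sigma M$.} The action $b*m=((b)\sigma^r)m$ is graded, because $(b)\sigma^r\in B_s$. For $a\in B_s$, $b\in B_t$, $m\in M_r$ we have $b*m\in M_{t+r}$, so
\[
a*(b*m)=(a)\sigma^{t+r}(b)\sigma^r m .
\]
Also
\[
(a*b)*m=((a)\sigma^t b)\sigma^r m=(a)\sigma^{t+r}(b)\sigma^r m ,
\]
and $1*m=m$. A degree-zero $B$-linear map $f$ satisfies
\[
(b*m)f=((b)\sigma^r)(m)f=b*(m)f,
\]
because $(m)f\in N_r$. Hence $F_\sigma$ is a functor, and it is clearly faithful and exact, since kernels and cokernels are computed on underlying graded vector spaces.

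For the inverse, note that $(B^\sigma)^{\sigma^{-1}}=B$. This makes sense because $\sigma$ is also an automorphism of $B^\sigma$: it preserves degrees and satisfies
\[
(a*b)\sigma=(a)\sigma^{t+1}(b)\sigma=(a)\sigma*(b)\sigma .
\]
Untwisting, $a\star b=((a)\sigma^{-t})*b=((a)\sigma^{-t}\sigma^{t})b=ab$. Likewise $F_{\sigma^{-1}}F_\sigma M$ has action $((b)\sigma^{-r})\sigma^r m=bm$, and the same holds in the other order. Thus $F_\sigma$ and $F_{\sigma^{-1}}$ are mutually inverse, strictly and not merely up to isomorphism.

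\emph{Projectivity.} Since $F_\sigma$ is an equivalence of abelian categories, it preserves and reflects projective objects of $\gmod{B}$ and $\gmod{B^\sigma}$. By Lemma~\ref{lem:graded-facts}(2), an object of each graded category is projective exactly when its underlying ordinary module is projective. Applying this on both sides gives the claim. The main subtlety here is that the underlying ordinary module of $F_\sigma M$ is a $B^\sigma$-module and not a twist of $M$ by an ordinary automorphism, so no ungraded comparison is available. The graded detour through Lemma~\ref{lem:graded-facts}(2) avoids this.

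As an optional sanity check, one can also verify directly that $F_\sigma(B(d))\simeq B^\sigma(d)$. The map $m\mapsto(m)\sigma^{-r}$ on $B(d)_r$ should do this, though we do not need it for the argument.
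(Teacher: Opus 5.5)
Your proof is correct and follows the paper's argument essentially step for step. Like the paper, you verify associativity and the module axioms directly, use the $\sigma^{-1}$-twist as a strict two-sided inverse, and get projectivity from the equivalence together with Lemma~\ref{lem:graded-facts}(2); you also check explicitly that $\sigma$ is an automorphism of $B^\sigma$, which the paper only asserts.

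One caveat concerns your optional closing remark, which nothing else depends on: the map $m\mapsto(m)\sigma^{-r}$ is not $B^\sigma$-linear. For $d=0$ one has $F_\sigma B=B^\sigma$ on the nose, and a degree-zero endomorphism fixing $1$ must be the identity. The correct isomorphism $F_\sigma(B(d))\to B^\sigma(d)$ is $m\mapsto(m)\sigma^{d}$, independent of $r$.
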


\begin{proof}
For \(a\in B_u\), \(b\in B_s\), and \(c\in B_t\),
\[
 (a*b)*c=((a)\sigma^{s+t})((b)\sigma^t)c=a*(b*c).
\]
The unit is unchanged, and \(\sigma\) remains a graded automorphism
of \(B^\sigma\). For \(m\in M_r\),
\[
 a*(b*m)=((a)\sigma^{r+s})((b)\sigma^r)m=(a*b)*m.
\]
If \(f:M\to N\) is a degree-zero \(B\)-homomorphism, then
\((b*m)f=((b)\sigma^r)((m)f)=b*((m)f)\), so the construction is
functorial. Twisting again by \(\sigma^{-1}\) restores the original
products and actions:
\[
 ((a)\sigma^{-t})*b=ab,
 \qquad ((b)\sigma^{-r})*m=bm.
\]
Thus \(F_{\sigma^{-1}}F_\sigma=\Id\) and
\(F_\sigma F_{\sigma^{-1}}=\Id\), with the indicated source categories.
Exactness follows because the underlying vector spaces and maps are
unchanged. The equivalence preserves and reflects graded projectivity;
Lemma~\ref{lem:graded-facts} then gives the ordinary assertion.
\end{proof}

Recall the shift convention \(M(d)_r=M_{r+d}\). Since \(\sigma\)
preserves degrees, \(\tw{\sigma^d}{M}\) retains the grading of \(M\).

\begin{lemma}\label{lem:shift-twist}
For every \(d\in\Z\), there is an identity of graded
\(B^\sigma\)-modules
\begin{equation*}
 F_\sigma\bigl(\tw{\sigma^d}{M}(d)\bigr)=F_\sigma(M)(d).
\end{equation*}
\end{lemma}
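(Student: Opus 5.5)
Both sides of the identity have the same underlying graded vector space. The left side is \(F_\sigma\) applied to the graded module \(\tw{\sigma^d}{M}(d)\), whose degree-\(r\) piece is \(M_{r+d}\). The right side is the shift of \(F_\sigma(M)\), whose degree-\(r\) piece is \(F_\sigma(M)_{r+d}=M_{r+d}\). So the plan is to compare the two \(B^\sigma\)-actions on a homogeneous element and check that they coincide. We first note that \(\tw{\sigma^d}{M}\) is a graded \(B\)-module with the grading of \(M\). This holds because \(\sigma\) preserves degrees: for \(b\in B_s\) and \(m\in M_r\), \(((b)\sigma^d)m\in M_{r+s}\). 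Hence \(\tw{\sigma^d}{M}(d)\) lies in \(\gmod{B}\), and \(F_\sigma\) may be applied to it.

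Fix \(b\in B_s\) and a homogeneous element \(m\) of degree \(r\) in \(\tw{\sigma^d}{M}(d)\), that is, \(m\in M_{r+d}\). By the definition of \(F_\sigma\), the action on the left side uses the degree \(r\) of \(m\) in the shifted module:
\[
 b*m=((b)\sigma^r)\cdot_{\sigma^d}m=(((b)\sigma^r)\sigma^d)m=((b)\sigma^{r+d})m,
\]
where we have used the right-action convention \(((b)\sigma^r)\sigma^d=(b)\sigma^{r+d}\). On the right side, \(m\) has degree \(r+d\) in \(F_\sigma(M)\), and shifting does not change the action. So the action there is \(((b)\sigma^{r+d})m\) as well. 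The two actions agree on homogeneous elements, and they agree everywhere by bilinearity. The gradings agree as shown above, so the two objects are equal as graded \(B^\sigma\)-modules.

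The only real obstacle is bookkeeping: one must keep track of which degree of \(m\) enters the exponent of \(\sigma\). It is the degree in the shifted module, \(r\), not the original degree \(r+d\). One must also check that the composite twist gives \(\sigma^{r+d}\) rather than some other ordering; since powers of a single automorphism commute, the order is harmless. Finally, the identification is compatible with degree-zero morphisms, because both constructions retain the underlying linear maps. This gives an identity of functors as well, if needed later.
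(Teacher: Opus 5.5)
Your proposal is correct and follows essentially the same argument as the paper: both sides have degree-\(r\) component \(M_{r+d}\), and for \(b\in B_s\), \(m\in M_{r+d}\), both actions reduce to \(((b)\sigma^{r+d})m\). You also note, as the paper does just before the lemma, that \(\tw{\sigma^d}{M}\) keeps the grading of \(M\) because \(\sigma\) preserves degrees.
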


\begin{proof}
The degree-\(r\) component on either side is \(M_{r+d}\).
For \(m\in M_{r+d}\) and \(b\in B_s\), the action on the left is
\[
 b*m=((b)\sigma^r)\cdot_{\sigma^d}m
     =((b)\sigma^{r+d})m.
\]
On the right, the twist is applied before the shift, while \(m\) has
degree \(r+d\), and gives the same action.
\end{proof}

\begin{proposition}\label{prop:twisted-Ext}
For \(M\in\gmod{B}\) and \(n\geq0\), there is a natural
vector-space isomorphism
\begin{equation}\label{eq:twisted-Ext}
 \Ext_{B^\sigma}^n(F_\sigma M,F_\sigma M)
 \simeq
 \bigoplus_{d\in\Z}
 \Ext_{\gmod{B}}^n\bigl(M,\tw{\sigma^d}{M}(d)\bigr).
\end{equation}
Here the Ext group on the left is computed in the ordinary module
category.
\end{proposition}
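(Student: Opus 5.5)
The argument chains three steps: the Ext decomposition of Lemma~\ref{lem:graded-facts}(3) over \(B^\sigma\), the twist equivalence \(F_\sigma\) of Lemma~\ref{lem:twist-equivalence}, and the shift--twist identity of Lemma~\ref{lem:shift-twist}.

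First, \(B^\sigma\) is a finite-dimensional \(\Z\)-graded algebra and \(F_\sigma M\) is a graded \(B^\sigma\)-module. Lemma~\ref{lem:graded-facts}(3), applied over \(B^\sigma\), therefore gives
\[
 \Ext_{B^\sigma}^n(F_\sigma M,F_\sigma M)\simeq
 \bigoplus_{d\in\Z}\Ext_{\gmod{B^\sigma}}^n\bigl(F_\sigma M,(F_\sigma M)(d)\bigr).
\]
Next, Lemma~\ref{lem:shift-twist} rewrites each target \((F_\sigma M)(d)\) as \(F_\sigma\bigl(\tw{\sigma^d}{M}(d)\bigr)\). This is an identity of graded \(B^\sigma\)-modules, so it can be substituted directly.

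It remains to show that the exact equivalence \(F_\sigma\) of Lemma~\ref{lem:twist-equivalence} induces isomorphisms
\[
 \Ext^n_{\gmod{B}}(U,V)\simeq\Ext^n_{\gmod{B^\sigma}}(F_\sigma U,F_\sigma V)
\]
for all graded \(B\)-modules \(U,V\). We apply this with \(U=M\) and \(V=\tw{\sigma^d}{M}(d)\), which is a graded \(B\)-module because \(\sigma\) preserves degrees.

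To prove the comparison, take a graded projective resolution \(P_\bullet\to U\) with degree-zero differentials, as in Lemma~\ref{lem:graded-facts}(2). The functor \(F_\sigma\) is exact and preserves projective objects, since it is an equivalence with inverse \(F_{\sigma^{-1}}\). Hence \(F_\sigma P_\bullet\) is a graded projective resolution of \(F_\sigma U\). Being an equivalence, \(F_\sigma\) also gives isomorphisms
\[
 \Hom_{\gmod B}(P_i,V)\simeq\Hom_{\gmod{B^\sigma}}(F_\sigma P_i,F_\sigma V),
\]
and these are compatible with the differentials. Taking cohomology yields the comparison.

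Naturality in \(M\) follows because every identification used is functorial. The decomposition of Lemma~\ref{lem:graded-facts} is natural, \(F_\sigma\) is a functor, and Lemma~\ref{lem:shift-twist} is a literal equality that is compatible with degree-zero maps, since those maps retain their underlying linear maps. Combining the three steps gives \eqref{eq:twisted-Ext}.

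The main subtlety is bookkeeping rather than difficulty. The left-hand Ext is ordinary, so it must first be decomposed over \(B^\sigma\), before any comparison with \(B\). One cannot compare ordinary Ext groups across the twist directly, because \(F_\sigma\) is defined only on graded modules and does not commute with shifts. That non-commutation is exactly the source of the twist \(\tw{\sigma^d}{-}\) in the summands, and it is handled by Lemma~\ref{lem:shift-twist}. One also has to check that the shift on \(B^\sigma\)-modules uses the same convention \(U(d)_r=U_{r+d}\).
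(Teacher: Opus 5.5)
Your proposal is correct and follows essentially the same route as the paper: decompose the ordinary Ext over \(B^\sigma\) by Lemma~\ref{lem:graded-facts}(3), rewrite each shifted target via Lemma~\ref{lem:shift-twist}, and transport through the exact equivalence \(F_\sigma\) of Lemma~\ref{lem:twist-equivalence}. The only difference is that you spell out why an exact equivalence preserving projectives induces isomorphisms of graded Ext groups via resolutions; the paper leaves this step implicit.
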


\begin{proof}
Apply the graded decomposition of ordinary Ext from
Lemma~\ref{lem:graded-facts}, followed by
Lemma~\ref{lem:shift-twist} and the exact equivalence above:
\begin{align*}
 \Ext_{B^\sigma}^n(F_\sigma M,F_\sigma M)
 &\simeq\bigoplus_d
 \Ext_{\gmod{B^\sigma}}^n(F_\sigma M,(F_\sigma M)(d))\\
 &\simeq\bigoplus_d
 \Ext_{\gmod{B^\sigma}}^n
 \bigl(F_\sigma M,F_\sigma(\tw{\sigma^d}{M}(d))\bigr)\\
 &\simeq\bigoplus_d
 \Ext_{\gmod{B}}^n\bigl(M,\tw{\sigma^d}{M}(d)\bigr).
\end{align*}
\end{proof}

\begin{proposition}\label{prop:transport}
Let \(M\in\gmod{B}\) and \(S\subseteq\Z_{>0}\). Suppose that
\[
 \Ext_B^n(M,M)=0\quad(n\in S),
 \qquad \tw{\sigma^d}{M}\simeq M\quad(d\in\Z)
\]
as ordinary modules. Then
\[
 \Ext_{B^\sigma}^n(F_\sigma M,F_\sigma M)=0\quad(n\in S).
\]
Furthermore, \(F_\sigma M\) is projective as an ordinary
\(B^\sigma\)-module if and only if \(M\) is projective as an
ordinary \(B\)-module.
\end{proposition}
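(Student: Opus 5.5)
The plan is to use Proposition~\ref{prop:twisted-Ext} to write the left-hand Ext group as a direct sum over \(d\). The goal is then to show that each summand \(\Ext_{\gmod{B}}^n(M,\tw{\sigma^d}{M}(d))\) vanishes for \(n\in S\). Projectivity is handled separately, and is immediate from the last assertion of Lemma~\ref{lem:twist-equivalence}. So the content is the vanishing statement.

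Fix \(n\in S\) and \(d\in\Z\). By hypothesis there is an ordinary isomorphism \(\phi:M\to\tw{\sigma^d}{M}\). Apply Lemma~\ref{lem:graded-facts}(3) to the pair \((M,\tw{\sigma^d}{M})\), which are both graded \(B\)-modules: \(\tw{\sigma^d}{M}\) keeps the grading of \(M\) since \(\sigma\) preserves degrees. This gives
\[
 \Ext_B^n\bigl(M,\tw{\sigma^d}{M}\bigr)\simeq\bigoplus_{e\in\Z}\Ext_{\gmod{B}}^n\bigl(M,\tw{\sigma^d}{M}(e)\bigr).
\]
The summand with \(e=d\) is exactly the one appearing in \eqref{eq:twisted-Ext}. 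The left side is isomorphic to \(\Ext_B^n(M,M)\), via the ordinary isomorphism \(\phi\) in the second variable. Ordinary Ext is functorial in ordinary module isomorphisms, and \(\phi\) need not be homogeneous, which does not matter here. Since \(\Ext_B^n(M,M)=0\), the whole direct sum vanishes, hence so does the \(e=d\) summand. Summing over \(d\) in \eqref{eq:twisted-Ext} gives \(\Ext_{B^\sigma}^n(F_\sigma M,F_\sigma M)=0\).

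The one subtle point, and the expected obstacle, is that the hypothesis gives only an ungraded isomorphism \(M\simeq\tw{\sigma^d}{M}\). One cannot directly identify graded Ext groups with target \(\tw{\sigma^d}{M}(d)\) with ones with target \(M(d)\). The argument avoids this by passing back to ordinary Ext, where the isomorphism applies, and then extracting a single graded summand via Lemma~\ref{lem:graded-facts}(3). This uses only that a direct sum vanishes exactly when every summand vanishes, so no compatibility of \(\phi\) with gradings is needed. Finally, for projectivity: \(F_\sigma M\) is the image of \(M\) under the equivalence of Lemma~\ref{lem:twist-equivalence}, and that lemma already states that the underlying ordinary module of \(F_\sigma M\) is projective if and only if that of \(M\) is.
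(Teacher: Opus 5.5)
Your proposal is correct and matches the paper's proof: you use the ordinary isomorphism \(\tw{\sigma^d}{M}\simeq M\) to get \(\Ext_B^n(M,\tw{\sigma^d}{M})=0\), decompose this group via Lemma~\ref{lem:graded-facts}(3), extract the \(e=d\) summand, and sum over \(d\) using \eqref{eq:twisted-Ext}, with projectivity coming directly from Lemma~\ref{lem:twist-equivalence}. Your observation that the isomorphism need not be homogeneous is the same point the paper makes in its remark after the proof.
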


\begin{proof}
Fix \(d\in\Z\) and \(n\in S\). The assumed ordinary isomorphism
and Lemma~\ref{lem:graded-facts} give
\[
 0=\Ext_B^n(M,\tw{\sigma^d}{M})
 \simeq\bigoplus_{e\in\Z}
 \Ext_{\gmod{B}}^n\bigl(M,\tw{\sigma^d}{M}(e)\bigr).
\]
In particular, its \(e=d\) summand is zero. This holds for every
\(d\), so \eqref{eq:twisted-Ext} proves the required vanishing.
The projectivity assertion is Lemma~\ref{lem:twist-equivalence}.
\end{proof}

The isomorphisms \(\tw{\sigma^d}{M}\simeq M\) need not preserve
the chosen gradings. Together with the assumed vanishing of
\(\Ext_B^n(M,M)\), they imply that
\(\Ext_B^n(M,\tw{\sigma^d}{M})=0\).
Each graded summand therefore vanishes.
This argument requires neither an equivalence of ordinary module
categories nor compatibility between independently chosen gradings.

\section{The main theorem and applications}\label{sec:main}

\subsection{Proof of the main theorem and its consequences}

The following lemma is the Artin local case of
Avramov--Buchweitz's self-extension theorem.

\begin{lemma}[{\cite[Theorem 4.2]{AB00}}]\label{lem:commutative}
Let \(C=k[t_1,\ldots,t_c]/(t_1^{a_1},\ldots,t_c^{a_c})\), where
\(a_i\geq2\). A finite-dimensional left \(C\)-module \(L\) satisfying
\(\Ext_C^2(L,L)=0\) is free.
\end{lemma}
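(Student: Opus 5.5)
We would deduce the statement directly from \cite[Theorem 4.2]{AB00}. That theorem says: over a commutative noetherian local complete intersection ring \(R\), a finitely generated module \(L\) with \(\Ext_R^{2i}(L,L)=0\) for a single \(i\geq1\) has finite projective dimension. The plan has three steps: check that \(C\) is such a ring, apply the theorem with \(i=1\), and pass from finite projective dimension to freeness.

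\emph{Step 1: \(C\) is a local complete intersection.} Let \(P=k[t_1,\ldots,t_c]_{(t_1,\ldots,t_c)}\), a regular local ring of dimension \(c\). The elements \(t_1^{a_1},\ldots,t_c^{a_c}\) form a regular sequence in \(P\). One way to see this is that they form a system of parameters in the Cohen--Macaulay ring \(P\). Alternatively, reduce to the polynomial ring, where powers of distinct variables form a regular sequence, and use that localization is flat. The quotient \(P/(t_1^{a_1},\ldots,t_c^{a_c})\) is identified with \(C\). Indeed, \(C\) is already local with maximal ideal \(\mathfrak m=(t_1,\ldots,t_c)\), since \(\mathfrak m\) is nilpotent and \(C/\mathfrak m=k\). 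Hence every element of \(k[t]\setminus(t)\) is already invertible modulo \((t_i^{a_i})\), so localizing changes nothing. Thus \(C\) is a complete intersection of codimension \(c\). It is zero-dimensional because it is finite-dimensional over \(k\).

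\emph{Step 2: apply Avramov--Buchweitz.} A finite-dimensional \(C\)-module \(L\) is finitely generated. The hypothesis \(\Ext_C^2(L,L)=0\) is the case \(i=1\) of \cite[Theorem 4.2]{AB00}, so \(\pd_C L<\infty\). Since \(C\) is commutative, the left/right distinction is immaterial.

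\emph{Step 3: finite projective dimension implies freeness.} Since \(\depth C=0\), the Auslander--Buchsbaum formula gives \(\pd_C L=\depth C-\depth L\leq 0\). Therefore \(L\) is projective, hence free over the local ring \(C\). Alternatively, \(C\) is Frobenius by Lemma~\ref{lem:structure} with all \(q_{ij}=1\), so it is self-injective. Over a self-injective algebra a module of finite projective dimension is projective: the last nonzero map in a minimal finite resolution \(0\to P_n\to P_{n-1}\) splits because \(P_n\) is injective. This contradicts minimality unless \(n=0\).

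The only point needing care is Step 1, namely matching the hypotheses of \cite[Theorem 4.2]{AB00}. The theorem is stated for local rings that are quotients of regular local rings by regular sequences, or whose completion is such a quotient. Here \(C\) is Artinian, so it is complete, and the presentation above applies verbatim. No other obstacle is expected.
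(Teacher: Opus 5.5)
Your proposal is correct and follows the paper's proof: you identify \(C\) as an Artin local complete intersection, apply \cite[Theorem 4.2]{AB00} with \(i=1\) to get \(\pd_C L<\infty\), and then use \(\depth C=0\) and the Auslander--Buchsbaum formula to conclude that \(L\) is projective, hence free. The differences are cosmetic. The paper writes \(C\) as \(k[[t_1,\ldots,t_c]]/(t_1^{a_1},\ldots,t_c^{a_c})\) rather than as a quotient of the localized polynomial ring, and it explicitly handles the trivial case \(L=0\), which your inequality \(\pd_C L\leq 0\) passes over.
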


\begin{proof}
The ring \(C\simeq k[[t_1,\ldots,t_c]]/(t_1^{a_1},\ldots,t_c^{a_c})\)
is an Artin local complete intersection. The cited theorem gives
\(\pd_C L<\infty\). If \(L\ne0\), the Auslander--Buchsbaum formula yields
\(\pd_C L=\depth C-\depth_C L=0\). Thus \(L\) is projective, hence free
over the local ring \(C\). The zero module is free as well.
\end{proof}

\begin{theorem}\label{thm:main}
Let \(k\) be any field and let \(A=A(\mathbf a,\mathbf q)\) be as in
\eqref{eq:QCI}. For every finite-dimensional left \(A\)-module \(M\),
\[
 \Ext_A^1(M,M)=0=\Ext_A^2(M,M)
 \quad\Longrightarrow\quad M\text{ is projective}.
\]
\end{theorem}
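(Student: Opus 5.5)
First reduce to an algebraically closed field. Let \(K=\bar k\). By Lemma~\ref{lem:base-change}, \(\Ext^n_{A_K}(M_K,M_K)\simeq K\otimes_k\Ext^n_A(M,M)\), so these groups vanish for \(n=1,2\). Moreover \(A_K\) is again of the form \eqref{eq:QCI} over \(K\), with the same \(\mathbf a\) and \(\mathbf q\). The same lemma says that projectivity of \(M_K\) implies projectivity of \(M\). From now on we may therefore assume that \(k\) is algebraically closed.

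The plan is to untwist the commutation parameters one generator at a time, using graded Zhang twists by diagonal automorphisms. Fix \(j\in\{2,\ldots,c\}\), and grade \(B=A\) by the coordinate grading \(\deg_j\). Let \(\sigma=\delta\) be the diagonal automorphism with \((x_i)\sigma=q_{ij}x_i\) for \(i<j\), and \((x_i)\sigma=x_i\) otherwise. By Lemma~\ref{lem:structure} it preserves the grading. In \(B^\sigma\), for \(i<j\) we compute \(x_i*x_j=((x_i)\sigma)x_j=q_{ij}x_ix_j=q_{ij}^2x_jx_i\). We also have \(x_j*x_i=x_jx_i\), because \(x_i\) has degree zero. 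The twisted relation therefore has parameter \(q_{ij}^2\), which is the wrong direction.

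So I would take \((x_i)\sigma=q_{ij}^{-1}x_i\) for \(i<j\) instead. Then \(x_i*x_j=q_{ij}^{-1}x_ix_j=x_jx_i=x_j*x_i\), so \(x_i\) and \(x_j\) commute in \(B^\sigma\). For pairs \(i<l\) with \(i,l\ne j\), both elements have degree zero and their product is unchanged. Powers \(x_i^{*a_i}\) still vanish, since \(*\) agrees with the original product up to nonzero scalars on monomials. Hence \(B^\sigma\) is again a quantum complete intersection. Its presentation comes from counting dimensions with the basis \(x^\alpha\): in it, the parameters \(q_{ij}\) with \(i<j\) are replaced by \(1\), and all other parameters are unchanged. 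Performing this for \(j=2,\ldots,c\) in turn makes every parameter equal to \(1\). The final algebra is the commutative \(C\) of Lemma~\ref{lem:commutative}.

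At each step we need a compatible grading on the current module \(M\). This comes from Theorem~\ref{thm:AO}, since \(M\) is rigid. We also need \(\tw{\sigma^d}{M}\simeq M\) for all \(d\), which holds by Proposition~\ref{prop:diagonal} because \(\sigma^d\) is diagonal. Proposition~\ref{prop:transport} with \(S=\{1,2\}\) then shows that \(F_\sigma M\) again has vanishing \(\Ext^1\) and \(\Ext^2\), and that \(F_\sigma M\) is projective exactly when \(M\) is. In particular \(F_\sigma M\) is rigid, so the induction continues over the new quantum complete intersection.

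After \(c-1\) steps we obtain a \(C\)-module \(L\) with \(\Ext_C^2(L,L)=0\). Lemma~\ref{lem:commutative} shows that \(L\) is free, and reflecting projectivity back through each step gives that \(M\) is projective.

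The main obstacle is the identification of \(B^\sigma\) as a quantum complete intersection with the altered parameters. This means checking the relations on generators, and not only on pairs involving \(x_j\). It also means checking that the natural surjection from the new presentation is an isomorphism, which I would verify by comparing dimensions via the basis from Lemma~\ref{lem:structure}.
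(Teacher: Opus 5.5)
Your proposal is correct and follows essentially the paper's proof: base change to \(\bar k\), then successive twists by the diagonal automorphisms \((x_i)\sigma=q_{ij}^{-1}x_i\) (\(i<j\)), with gradings from Theorem~\ref{thm:AO}, Propositions~\ref{prop:diagonal} and~\ref{prop:transport} at each step, and Lemma~\ref{lem:commutative} at the end. The differences are cosmetic. You untwist in the order \(j=2,\ldots,c\) rather than the paper's \(j=c,\ldots,2\); this is harmless because each twist changes only the parameters \(q_{ij}\) with \(i<j\). You should still state explicitly that \(q_{jl}\) for \(l>j\) is unchanged, since \(\sigma\) fixes \(x_l\). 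You also identify \(B^\sigma\) by a spanning-plus-dimension count, leaving restriction along that isomorphism implicit, rather than writing down the paper's explicit isomorphism \(\phi_j\).
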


\begin{proof}
We first assume that \(k\) is algebraically closed.

\smallskip\noindent
\emph{The intermediate algebras.}
For \(1\leq j\leq c\), put
\begin{equation*}
 q_{uv}^{(j)}=
 \begin{cases}q_{uv},&v\leq j,\\1,&v>j,\end{cases}
 \qquad
 B_j=\frac{k\langle x_1,\ldots,x_c\rangle}
 {(x_i^{a_i},\ x_ux_v-q_{uv}^{(j)}x_vx_u\ (u<v))}.
\end{equation*}
Here all power relations are included. Then \(B_c=A\), every
\(x_\ell\) with \(\ell>j\) is central in \(B_j\), and
\[
 B_1=C=k[x_1,\ldots,x_c]/(x_1^{a_1},\ldots,x_c^{a_c}).
\]
Each \(B_j\) has the basis of Lemma~\ref{lem:structure}.
For \(j\geq2\), use \(\deg_jx_i=\delta_{ij}\) and set
\begin{equation*}
 (x_i)\sigma_j=
 \begin{cases}q_{ij}^{-1}x_i,&i<j,\\x_i,&i\geq j.\end{cases}
\end{equation*}
This is a graded diagonal automorphism. Writing \(*_j\) for the
twisted multiplication, we have, for \(i<j\),
\begin{equation*}
 x_i*_jx_j=q_{ij}^{-1}x_ix_j=x_jx_i=x_j*_jx_i.
\end{equation*}
Two generators different from \(x_j\) both have degree zero, so their
products are unchanged. If \(\ell>j\), then \(\sigma_j\) fixes
\(x_\ell\), and \(x_\ell\) still commutes with \(x_j\).
All power relations are preserved, since a generator different from
\(x_j\) has degree zero and \(\sigma_j\) fixes \(x_j\).
Consequently there is a graded homomorphism
\begin{equation*}
 \phi_j:B_{j-1}\longrightarrow B_j^{\sigma_j},\qquad
 (x_i)\phi_j=x_i,
\end{equation*}
where both sides carry the \(j\)-th coordinate grading. On ordered
monomials it is given by
\begin{equation*}
 (x^\alpha)\phi_j=\lambda_j(\alpha)x^\alpha,
 \qquad \lambda_j(\alpha)=\prod_{i<j}q_{ij}^{-\alpha_i\alpha_j}.
\end{equation*}
Indeed, each of the \(\alpha_j\) occurrences of \(x_j\), when multiplied
in on the right, contributes \(\prod_{i<j}q_{ij}^{-\alpha_i}\).
All these coefficients are nonzero, so \(\phi_j\) maps the standard
basis to a basis and is an isomorphism.

\smallskip\noindent
\emph{The modules and the induction.}
Start with \(M_c=M\). Suppose \(M_j\in\umod{B_j}\) has been constructed,
with both \(\Ext_{B_j}^1(M_j,M_j)\) and \(\Ext_{B_j}^2(M_j,M_j)\) zero.
Theorem~\ref{thm:AO} provides a compatible \(j\)-th coordinate grading
\[
 \widetilde M_j=\bigoplus_{r\in\Z}(M_j)_r^{[j]}.
\]
Let \(R_j:\gmod{B_j^{\sigma_j}}\to\gmod{B_{j-1}}\) be restriction along
\(\phi_j\). Define \(M_{j-1}\) to be the ordinary module underlying
\(R_jF_{\sigma_j}(\widetilde M_j)\). For \(m\in(M_j)_r^{[j]}\),
\begin{equation*}
 x_i\cdot_{j-1}m=
 \begin{cases}
 q_{ij}^{-r}(x_i\cdot_jm),&i<j,\\
 x_i\cdot_jm,&i\geq j.
 \end{cases}
\end{equation*}
For an ordered monomial the full formula, including \(\phi_j\), is
\begin{equation*}
 x^\alpha\cdot_{j-1}m=
 \left(\prod_{i<j}q_{ij}^{-\alpha_i(r+\alpha_j)}\right)
 (x^\alpha\cdot_jm),
 \qquad m\in(M_j)_r^{[j]}.
\end{equation*}

Every power \(\sigma_j^d\) is diagonal. Proposition~\ref{prop:diagonal}
therefore gives \(\tw{\sigma_j^d}{M_j}\simeq M_j\) as ordinary modules
for all \(d\in\Z\). Proposition~\ref{prop:transport}, with
\(S=\{1,2\}\), and restriction along \(\phi_j\) now yield
\begin{gather}\label{eq:induction-invariants}
 \dim_kM_{j-1}=\dim_kM_j,\qquad
 \Ext_{B_{j-1}}^n(M_{j-1},M_{j-1})=0\quad(n=1,2),\\
 M_{j-1}\text{ is projective}
 \quad\Longleftrightarrow\quad M_j\text{ is projective}.\notag
\end{gather}
In particular, the new ordinary module \(M_{j-1}\) is rigid. After
forgetting its current grading, Theorem~\ref{thm:AO} can be applied
again for the \((j-1)\)-st coordinate grading. No compatibility between
successive choices is required. This proves that the induction is
legitimate at every stage.

After \(c-1\) steps, \(M_1\) is a \(C\)-module with
\(\Ext_C^2(M_1,M_1)=0\). Lemma~\ref{lem:commutative} makes \(M_1\)
free. Reflecting projectivity through \eqref{eq:induction-invariants}
gives projectivity of \(M_c=M\). If \(c=1\), there are no twists and
Lemma~\ref{lem:commutative} applies directly.

\smallskip\noindent
\emph{Descent to the original field.}
For arbitrary \(k\), choose an algebraic closure \(K\). The algebra
\(A_K=K\otimes_kA\) is again the quantum complete intersection
\eqref{eq:QCI} over \(K\). By Lemma~\ref{lem:base-change}, \(M_K\)
has zero first and second self-extension groups. The algebraically closed case shows that \(M_K\) is projective.
Lemma~\ref{lem:base-change} then implies that \(M\) is projective.
\end{proof}

\begin{corollary}\label{cor:ARC}
Every quantum complete intersection \eqref{eq:QCI} satisfies ARC and TC2.
\end{corollary}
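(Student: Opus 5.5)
The corollary should follow almost formally from Theorem~\ref{thm:main} together with self-injectivity from Lemma~\ref{lem:structure}. I would deduce ARC first and then observe that TC2 is the special case relevant to self-injective algebras.

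\textbf{ARC.} Let \(M\) be a finitely generated left \(A\)-module with \(\Ext_A^i(M,M\oplus A)=0\) for all \(i>0\).
\begin{enumerate}
\item Since \(A\) is finite-dimensional over \(k\), \(M\) is finite-dimensional, so Theorem~\ref{thm:main} applies to it.
\item Additivity of \(\Ext\) in the second variable gives \(\Ext_A^i(M,M)=0\) for all \(i>0\).
\item In particular \(\Ext_A^1(M,M)=0=\Ext_A^2(M,M)\), and Theorem~\ref{thm:main} yields that \(M\) is projective.
\end{enumerate}
The hypothesis \(\Ext_A^i(M,A)=0\) is never used. That is harmless: by Lemma~\ref{lem:structure}, \(A\) is Frobenius, so \({}_AA\) is injective and \(\Ext_A^i(M,A)=0\) for \(i>0\) holds automatically.

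\textbf{TC2.} Because \(A\) is self-injective, TC2 asks that every \(M\) with \(\Ext_A^i(M,M)=0\) for all \(i>0\) be projective. This is exactly what the argument above shows. If TC2 is instead read with the condition \(\Ext_A^i(M,M)=0\) for \(i>0\) together with \(M\) finitely generated over \(A\), the same argument still applies. Theorem~\ref{thm:main} in fact gives more, since only the vanishing for \(i=1,2\) is needed.

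\textbf{Expected obstacle.} No real difficulty is expected, because all the work is in Theorem~\ref{thm:main}. The only point needing care is to match the formulations of ARC and TC2 in \cite{AR75,Tachikawa73}. Both concern finitely generated modules over an Artin algebra, and here \(A\) is a finite-dimensional \(k\)-algebra, hence an Artin algebra. So both conjectures reduce to the statement just proved.
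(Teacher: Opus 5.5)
Your proposal is correct and follows the paper's proof: the hypothesis (via additivity of $\Ext$ in the second variable) gives $\Ext_A^1(M,M)=0=\Ext_A^2(M,M)$. Theorem~\ref{thm:main} then gives projectivity, and self-injectivity from Lemma~\ref{lem:structure} makes the condition on $\Ext_A^i(M,A)$ automatic, so ARC and TC2 coincide here, a point the paper leaves implicit and you spell out.
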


\begin{proof}
Self-orthogonality implies the two vanishings in Theorem~\ref{thm:main}.
The algebra is self-injective by Lemma~\ref{lem:structure}, so this
proves TC2 and ARC.
\end{proof}

We now apply the main theorem to the Liu--Schulz algebras.
In the classical construction, the parameter \(q\) is assumed
to have infinite multiplicative order
\cite[Section 6]{CPX12}.

\begin{corollary}\label{cor:LS}
For every field \(k\) and \(q\in k^\times\), let
\[
 \Lambda_q=
 \frac{k\langle x,y,z\rangle}
 {(x^2,y^2,z^2,\ yx+qxy,\ zy+qyz,\ xz+qzx)}.
\]
Then \(\Lambda_q\) is an eight-dimensional local symmetric algebra, and
every finite-dimensional left \(\Lambda_q\)-module \(M\) satisfies
\[
 \Ext_{\Lambda_q}^1(M,M)=0=\Ext_{\Lambda_q}^2(M,M)
 \quad\Longrightarrow\quad M\text{ is projective}.
\]
In particular, \(\Lambda_q\) satisfies ARC and TC2.
\end{corollary}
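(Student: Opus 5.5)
The plan is to recognise \(\Lambda_q\) as an instance of \eqref{eq:QCI}, so that Theorem~\ref{thm:main} and Corollary~\ref{cor:ARC} apply directly. The only extra work is to check the structural claims: dimension eight, local, and symmetric.

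\emph{Identification with \eqref{eq:QCI}.} Take \(c=3\), \(a_1=a_2=a_3=2\), and \(x_1=x\), \(x_2=y\), \(x_3=z\). The relation \(yx+qxy=0\) reads \(xy=-q^{-1}yx\), so \(q_{12}=-q^{-1}\). Likewise \(zy+qyz=0\) gives \(q_{23}=-q^{-1}\). The relation \(xz+qzx=0\) gives \(xz=-qzx\), so \(q_{13}=-q\). Here we need \(q\neq0\), which holds since \(q\in k^\times\). These relations generate the same two-sided ideal as those in \eqref{eq:QCI}, so \(\Lambda_q\simeq A(\mathbf a,\mathbf q)\).

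Lemma~\ref{lem:structure} then gives the basis \(x^\alpha\) with \(\alpha\in\{0,1\}^3\), hence dimension \(8\). The same lemma shows that \(\Lambda_q\) is local and Frobenius. The Ext implication is exactly Theorem~\ref{thm:main}, and ARC and TC2 follow from Corollary~\ref{cor:ARC}.

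\emph{Symmetry.} This is the only step not already covered by earlier results, and I expect it to be the main obstacle, since it needs an explicit check. I would use the Frobenius form \(\ell\) from the proof of Lemma~\ref{lem:structure}, which extracts the coefficient of \(x^\omega=xyz\), and show that it is a trace, i.e.\ \((ab)\ell=(ba)\ell\). It suffices to check this on basis monomials \(a=x^\alpha\), \(b=x^{\omega-\alpha}\), since all other products of basis monomials have zero \(\omega\)-coefficient. The condition is then
\[
\kappa(\alpha,\omega-\alpha)=\kappa(\omega-\alpha,\alpha),
\]
which amounts to \(\prod_{u<v}q_{uv}^{\alpha_u-\alpha_v}=1\) for \(\alpha\) ranging over \(\{0,1\}^3\).

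Equivalently, one checks for each generator \(x_i\) that \(x_i\) commutes with \(x^{\omega-e_i}\) in the top degree. We have \(x\cdot yz=(q_{12}q_{13})^{-1}\,yz\cdot x\) and \(q_{12}q_{13}=1\). For \(y\), we have \(y\cdot xz=q_{12}^{-1}q_{23}\,xz\cdot y\) and \(q_{12}=q_{23}\). For \(z\), we need \(q_{13}q_{23}=1\), which holds.

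So the Nakayama automorphism of \(\ell\) is trivial on the generators, hence trivial, and \(\ell\) is a symmetrizing form. The signs in the relations are precisely what make these products equal to \(1\), and I would recompute them carefully against the convention in \(\kappa\).
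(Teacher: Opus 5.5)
Your proposal is correct and follows the paper's proof. It uses the same identification \((q_{12},q_{13},q_{23})=(-q^{-1},-q,-q^{-1})\), the same appeal to Theorem~\ref{thm:main}, Corollary~\ref{cor:ARC} and Lemma~\ref{lem:structure}, and the same form \(\ell\) extracting the coefficient of \(xyz\), whose symmetry you verify via the \(\kappa\)-criterion \(\prod_{u<v}q_{uv}^{\alpha_u-\alpha_v}=1\). The only slip is harmless: in fact \(x\cdot yz=q_{12}q_{13}\,yz\cdot x\), not \((q_{12}q_{13})^{-1}\,yz\cdot x\), which makes no difference since \(q_{12}q_{13}=1\).
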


\begin{proof}
This is \eqref{eq:QCI} with
\[
 (x_1,x_2,x_3)=(x,y,z),\quad (a_1,a_2,a_3)=(2,2,2),\quad
 (q_{12},q_{13},q_{23})=(-q^{-1},-q,-q^{-1}).
\]
The projectivity assertion and its consequences follow from
Theorem~\ref{thm:main} and Corollary~\ref{cor:ARC}.
Lemma~\ref{lem:structure} gives locality and the basis
\(1,x,y,z,xy,xz,yz,xyz\).
For symmetry, let \(\ell\) extract the coefficient of \(xyz\).
The relations give
\[
 zxy=xyz=yzx,\qquad (xz)y=-qxyz=y(xz).
\]
Hence degree-two elements commute with the generators;
products with a repeated generator are zero.
For homogeneous elements \(a\) and \(b\), the Frobenius pairing
\((a,b)\mapsto(ab)\ell\) vanishes unless
\(\deg a+\deg b=3\).
In the remaining cases, symmetry follows from the commutation
relations above and the fact that degree-zero elements are scalars.
Thus the nondegenerate Frobenius pairing is symmetric.
\end{proof}

\subsection{A four-generator calculation}\label{subsec:four}

Let \(c=4\). The relations \(x_i^{a_i}=0\) remain unchanged throughout.
The following table lists the commutation parameters in the order
\((12,13,14,23,24,34)\).
In the last column, each tuple
\((\lambda_1,\lambda_2,\lambda_3,\lambda_4)\)
denotes the diagonal automorphism given by
\((x_i)\sigma=\lambda_i x_i\).
\[
\begin{array}{c|c|c}
 &\text{commutation parameters}&\text{next automorphism}\\ \hline
 B_4&(q_{12},q_{13},q_{14},q_{23},q_{24},q_{34})
 &(q_{14}^{-1},q_{24}^{-1},q_{34}^{-1},1)\\
 B_3&(q_{12},q_{13},1,q_{23},1,1)&(q_{13}^{-1},q_{23}^{-1},1,1)\\
 B_2&(q_{12},1,1,1,1,1)&(q_{12}^{-1},1,1,1)\\
 B_1&(1,1,1,1,1,1)&\text{none}.
\end{array}
\]
Assume first that \(k\) is algebraically closed
and \(M_4=M\) satisfies \(\Ext_{B_4}^n(M_4,M_4)=0\) for \(n=1,2\).

\smallskip\noindent
\emph{First step: \(B_4\) to \(B_3\).}
Use \(\deg_4x_4=1\) and \(\deg_4x_i=0\) for \(i<4\), and let
\[
 (x_i)\sigma_4=q_{i4}^{-1}x_i\quad(i=1,2,3),\qquad
 (x_4)\sigma_4=x_4.
\]
Then
\[
 x_i*_4x_4=q_{i4}^{-1}x_ix_4=x_4x_i=x_4*_4x_i\quad(i=1,2,3),
\]
while the relations among \(x_1,x_2,x_3\) are unchanged. The isomorphism
\(\phi_4:B_3\to B_4^{\sigma_4}\) satisfies
\[
 (x^\alpha)\phi_4=
 q_{14}^{-\alpha_1\alpha_4}q_{24}^{-\alpha_2\alpha_4}
 q_{34}^{-\alpha_3\alpha_4}x^\alpha.
\]
Choose a compatible grading of \(M_4\). For
\(m\in(M_4)_r^{[4]}\), the action on \(M_3\) is
\[
\begin{aligned}
 x_1\cdot_3m&=q_{14}^{-r}(x_1\cdot_4m),&
 x_2\cdot_3m&=q_{24}^{-r}(x_2\cdot_4m),\\
 x_3\cdot_3m&=q_{34}^{-r}(x_3\cdot_4m),&
 x_4\cdot_3m&=x_4\cdot_4m.
\end{aligned}
\]
By Proposition~\ref{prop:transport}, both self-extension groups vanish
for \(M_3\), and \(M_3\) is projective if and only if \(M_4\) is.

\smallskip\noindent
\emph{Second step: \(B_3\) to \(B_2\).}
Forget the preceding grading of \(M_3\), give \(x_3\) degree one in
\(B_3\), and choose a compatible grading of the rigid module \(M_3\).
Set
\[
 (x_1)\sigma_3=q_{13}^{-1}x_1,\quad
 (x_2)\sigma_3=q_{23}^{-1}x_2,\quad
 (x_3)\sigma_3=x_3,\quad (x_4)\sigma_3=x_4.
\]
For \(i=1,2\),
\[
 x_i*_3x_3=q_{i3}^{-1}x_ix_3=x_3x_i=x_3*_3x_i.
\]
Also \(x_3*_3x_4=x_3x_4=x_4x_3=x_4*_3x_3\), and \(x_4\) still
commutes with \(x_1,x_2\). Only \(q_{12}\) can remain nontrivial.
The isomorphism \(\phi_3:B_2\to B_3^{\sigma_3}\) has formula
\[
 (x^\alpha)\phi_3=
 q_{13}^{-\alpha_1\alpha_3}q_{23}^{-\alpha_2\alpha_3}x^\alpha.
\]
For \(m\in(M_3)_s^{[3]}\), define
\[
\begin{aligned}
 x_1\cdot_2m&=q_{13}^{-s}(x_1\cdot_3m),&
 x_2\cdot_2m&=q_{23}^{-s}(x_2\cdot_3m),\\
 x_3\cdot_2m&=x_3\cdot_3m,&
 x_4\cdot_2m&=x_4\cdot_3m.
\end{aligned}
\]
Again the two self-extension groups vanish, and projectivity is
preserved and reflected. The degree \(s\) belongs to the newly chosen
grading; a vector homogeneous for the previous grading need not be
homogeneous for this one.

\smallskip\noindent
\emph{Third step: \(B_2\) to \(B_1\).}
Give \(x_2\) degree one, choose a compatible grading of the rigid
ordinary module \(M_2\), and put
\[
 (x_1)\sigma_2=q_{12}^{-1}x_1,\qquad
 (x_i)\sigma_2=x_i\quad(i=2,3,4).
\]
Then
\[
 x_1*_2x_2=q_{12}^{-1}x_1x_2=x_2x_1=x_2*_2x_1.
\]
The generators \(x_3,x_4\) stay central, and therefore
\[
 B_1=k[x_1,x_2,x_3,x_4]/(x_1^{a_1},x_2^{a_2},x_3^{a_3},x_4^{a_4}),
 \qquad (x^\alpha)\phi_2=q_{12}^{-\alpha_1\alpha_2}x^\alpha.
\]
For \(m\in(M_2)_t^{[2]}\), the final action is
\[
 x_1\cdot_1m=q_{12}^{-t}(x_1\cdot_2m),\qquad
 x_i\cdot_1m=x_i\cdot_2m\quad(i=2,3,4).
\]
The module \(M_1\) has \(\Ext_{B_1}^2(M_1,M_1)=0\), so
By Lemma~\ref{lem:commutative}, \(M_1\) is free. Finally,
\[
 M_1\text{ projective}\Longrightarrow M_2\text{ projective}
 \Longrightarrow M_3\text{ projective}\Longrightarrow M_4=M\text{ projective}.
\]
The construction uses three separately chosen gradings, three twists,
and all six original parameters. Over an arbitrary field, it is
performed after extension to an algebraic closure and followed by
Lemma~\ref{lem:base-change}.

\end{document}